\documentclass[reqno]{amsart}
\begin{document}

\theoremstyle{plain}
\begingroup
\newtheorem*{metathm}{Metatheorem} 
\newtheorem{theorem}{Theorem} 
\newtheorem{corollary}{Corollary}
\newtheorem{lemma}{Lemma}
\endgroup
%
%
%
%
\newcommand{\nwc}{\newcommand}
\nwc{\qref}[1]{(\ref{#1})}
\nwc{\cadlag}{c\`{a}dl\`{a}g}
\nwc{\la}{\label}
\nwc{\nn}{\nonumber}
\nwc{\Z}{\mathbb{Z}}
\nwc{\C}{\mathbb{C}}
\nwc{\T}{\mathbb{T}}
\nwc{\E}{\mathbb{E}}
\nwc{\R}{\mathbb{R}}
\nwc{\N}{\mathbb{N}}
\nwc{\Rn}{\mathbb{R}^n}
\nwc{\PP}{\mathcal{P}}
\nwc{\M}{\mathcal{M}}
\nwc{\ad}{\mathrm{ad}}
\nwc{\Ad}{\mathrm{Ad}}
\nwc{\diag}{\mathrm{diag}}
\nwc{\llangle}{\langle\langle}
\nwc{\rrangle}{\rangle\rangle}
\nwc{\hc}{\mathcal{H}}
\nwc{\eig}{\igma}
\nwc{\Ito}{It\^{o}}

\nwc{\law}{\stackrel{\mathcal{L}}{\rightarrow}}
\nwc{\eqd}{\stackrel{\mathcal{L}}{=}}
\nwc{\vp}{\varphi}
\nwc{\veps}{\varepsilon}
\nwc{\eps}{\veps}
\nwc{\dnto}{\downarrow}
\nwc{\nsup}{^{(n)}}
\nwc{\ksup}{^{(k)}}
\nwc{\jsup}{^{(j)}}
\nwc{\nksup}{^{(n_k)}}
\nwc{\inv}{^{-1}}
\nwc{\argmin}{\mathrm{argmin}}
\nwc{\argmax}{\mathrm{argmax}}
\nwc{\Pn}{\mathbb{P}(n)}
\nwc{\PnR}{\mathbb{P}(n;\mathbb{R})}
\nwc{\PnC}{\mathbb{P}(n;\mathbb{C})}
\nwc{\Hn}{\mathbb{H}(n)}
\nwc{\HnR}{\mathbb{H}(n;\mathbb{R})}
\nwc{\HnC}{\mathbb{H}(n;\mathbb{C})}
\nwc{\An}{\mathbb{A}(n)}
\nwc{\AnR}{\mathbb{A}(n;\mathbb{R})}
\nwc{\AnC}{\mathbb{A}(n;\mathbb{C})}
\nwc{\Mn}{\mathbb{M}(n)}
\nwc{\feasible}{\mathcal{F}}
\nwc{\GLn}{GL(n)}
\nwc{\GLnC}{GL(n;\mathbb{C})}
\nwc{\GLnR}{GL(n;\mathbb{R})}
\nwc{\Poincare}{Poincar\'{e}}
\nwc{\Un}{U(n)}
\nwc{\Sn}{\mathbb{S}^{n}}
\nwc{\Tn}{\mathbb{T}^n}

\nwc{\son}{\mathfrak{so}(n)}
\nwc{\so}{\mathfrak{so}}
\nwc{\SO}{\mathsf{SO}}
\nwc{\SOn}{\mathsf{SO}(n)}
\nwc{\Symm}{\mathsf{Symm}}
\nwc{\SymmN}{\mathsf{Symm}(N)}
\nwc{\Pos}{\mathsf{Pos}}
\nwc{\PosN}{\mathsf{Pos(N)}}
\nwc{\PosM}{\mathsf{Pos}(\mathcal{M})}
\nwc{\PosMg}{\mathsf{Pos}(\mathcal{M},g)}
\nwc{\SymmM}{\mathsf{Symm}(\mathcal{M})}
\nwc{\SymmMg}{\mathsf{Symm}(\mathcal{M},g)}
\nwc{\PosqM}{\mathsf{Pos}_q(\mathcal{M})}
\nwc{\PosqMu}{\mathsf{Pos}_q(\mathcal{M},u^\sharp e)}
\nwc{\PosqMg}{\mathsf{Pos}_q(\mathcal{M},g)}
\nwc{\PosqMgt}{\mathsf{Pos}_q(\mathcal{M},g_t)}
\nwc{\PosqMgtplus}{\mathsf{Pos}_q(\mathcal{M},g_t)_+}
\nwc{\SymmqM}{\mathsf{Symm_q}(\mathcal{M})}
\nwc{\SymmqMg}{\mathsf{Symm_q}(\mathcal{M},g)}
\nwc{\PosMplus}{\mathsf{Pos}(\mathcal{M})_+}
\nwc{\PosMgplus}{\mathsf{Pos}(\mathcal{M},g)_+}
\nwc{\PosqMplus}{\mathsf{Pos}_q(\mathcal{M})_+}
\nwc{\PosqMgplus}{\mathsf{Pos}_q(\mathcal{M},g)_+}
\nwc{\MetM}{\mathsf{Met}(\mathcal{M})}

\renewcommand{\Re}{\mathop{\rm Re}\nolimits}
\renewcommand{\Im}{\mathop{\rm Im}\nolimits}

\nwc{\Levy}{L\'{e}vy}
\nwc{\HC}{Harish-Chandra}
\nwc{\BW}{Bures-Wasserstein}
\nwc{\CH}{Cartan-Hadamard}

\theoremstyle{definition}
\newtheorem{defn}[theorem]{Definition} 
\newtheorem{remark}[theorem]{Remark} 
\newtheorem{hyp}[theorem]{Assumption} 
\newtheorem{counterex}[theorem]{Counterexample} 
\newtheorem{ex}[theorem]{Example} 
\theoremstyle{remark}
\newtheorem{notation}{Notation} 
\renewcommand{\thenotation}{} 
\newtheorem{terminology}{Terminology} 
\renewcommand{\theterminology}{} 
\numberwithin{equation}{section}
\numberwithin{figure}{section}

%
%

\nwc{\dd}{\mathfrak{d}}
\nwc{\gln}{\mathfrak{g}}
\nwc{\unn}{\mathfrak{u}}
\nwc{\lotri}{\mathfrak{l}}
\nwc{\GLM}{GL(m,\C)}
\nwc{\proj}{P}
\nwc{\Tr}{\mathrm{Tr}}
\nwc{\Id}{\mathrm{Id}}
\nwc{\MM}{\mathcal{M}}
\nwc{\TT}{\mathbb{T}}
\nwc{\Rq}{\mathbb{R}^q}
\nwc{\GL}{\mathsf{GL}}
\nwc{\schwartz}{\mathcal{S}}
\nwc{\laplacian}{\triangle}
\nwc{\eigen}{\psi}
\nwc{\one}{\mathbf{1}}
\nwc{\Sz}{Sz\'{e}kelyhidi}
\nwc{\Arnold}{Arnol'{d}}
\nwc{\Muller}{M\"{u}ller}
\nwc{\Sverak}{\v{S}ver\'{a}k}
\nwc{\Berard}{B\'{e}rard}
\nwc{\Scal}{\mathrm{Scal}}
\nwc{\Ric}{\mathrm{Ric}}
\nwc{\nnn}{\mathbf{n}}
\nwc{\bb}{\mathbf{b}}
\nwc{\Holder}{H\"{o}lder}
\nwc{\vol}{\mathrm{vol}}
\nwc{\Vol}{\mathrm{Vol}}
\nwc{\rhoeq}{\rho_{\mathrm{eq}}}
\nwc{\matn}{\mathbb{M}}
\nwc{\grad}{\mathrm{grad}}
\nwc{\Lie}{\mathcal{L}}
\nwc{\Hess}{\mathrm{Hess}}

\title{Motion by mean curvature and Dyson Brownian Motion}

\author{Ching-Peng Huang}
\author{Dominik Inauen}
\author{Govind Menon}

\address{Division of Applied Mathematics, Brown University, 182 George St., Providence, RI 02912.}
\address{Institut f\"{u}r mathematik, Universit\"{a}t Leipzig, D-04109, Leipzig, Germany.}
\email{cphuang@brown.edu}
\email{dominik.inauen@math.uni-leipzig.de}
\email{govind\_menon@brown.edu}
%

%


\begin{abstract}
We construct Dyson Brownian motion for $\beta \in (0,\infty]$ by adapting the extrinsic construction of Brownian motion on Riemannian manifolds to the geometry of group orbits within the space of Hermitian matrices. When $\beta$ is infinite, the eigenvalues evolve by Coulombic repulsion and the group orbits evolve by motion by (minus one half times) mean curvature.
\end{abstract}
\maketitle

\section{Introduction}
Fix $\beta \in (0,\infty]$ and $n$ standard independent Wiener processes $\{B_j(t)\}_{j=1}^n$, $t \in [0,\infty)$. Dyson Brownian motion refers to the unique weak solution to the 
\Ito\/ equation
\begin{equation}\label{e:eigenvalueeq}
d\lambda_j =\sqrt{ \frac{2}{\beta}}dB_j + \sum_{k\neq j}\frac{dt}{\lambda_j-\lambda_k}\,,
\end{equation}
within the Weyl chamber
\begin{equation}
\label{eq:weyl-chamber}
\mathcal{W}_n = \{(\lambda_1,\ldots,\lambda_n) \in \R^n\left| \lambda_1 < \lambda_2 \ldots < \lambda_n\right.\}.
\end{equation}
In this paper, we introduce a new stochastic model for equation~\eqref{e:eigenvalueeq}. To this end, we first review the most relevant past constructions to provide some context for our work.

Let $\mu_{n,\beta}$ denote the probability measure on $\mathcal{W}_n$ with density proportional to the weight $\prod_{1\leq j < k \leq n} (\lambda_k-\lambda_j)^\beta$. There are two fundamentally different classes of random matrices whose eigenvalues have law $\mu_{n,\beta}$. For $\beta=1$,$2$ and $4$ these are the self-dual Gaussian ensembles (GOE, GUE and GSE) of real symmetric, complex Hermitian and quaternionic matrices introduced in 
the 1960s by Dyson, Gaudin and Mehta~\cite{Mehta}. For $\beta \in (0,\infty]$, these are the Gaussian $\beta$ ensembles (G$\beta$E) of real, symmetric tridiagonal matrices, introduced by Dumitriu and Edelman~\cite{Dumitriu}. 

Orthogonal polynomials played an important role in the first studies of these ensembles (see~\cite{Deift,Dumitriu,Mehta}). However, dynamic models, especially equation~\eqref{e:eigenvalueeq}, play an important role in understanding universality~\cite{Yau}. Once a random matrix ensemble has been chosen, natural time dynamics can be obtained by replacing a single matrix drawn from the ensemble with a matrix-valued process whose equilibrium measure is the given ensemble. Dyson obtained equation~\eqref{e:eigenvalueeq} in this way for $\beta=1$, $2$ and $4$, replacing each Gaussian self-dual ensemble with its associated Ornstein-Uhlenbeck process~\cite{Dyson}.

For $\beta \in (0, \infty]$ the extension of Dyson's approach to the G$\beta$E ensemble is a subtle problem. Holcomb and Paquette have constructed diffusions of tridiagonal matrices whose eigenvalues satisfy equation~\eqref{e:eigenvalueeq} using orthogonal polynomials and the Lanczos algorithm~\cite{Holcomb}. On the other hand, Yabuoku has studied the eigenvalue process for the G$\beta$E diffusion obtained by choosing independent Ornstein-Uhlenbeck processes on the diagonal and independent Bessel processes on the off-diagonal (see~\cite[(2.1)]{Yabuoku}). He shows that the eigenvalue process of a G$\beta$E diffusion depends on additional minors and does {\em not\/} satisfy Dyson Brownian motion. The gap between these results arises because the diffusions of tridiagonal matrices introduced by Holcomb and Paquette are described somewhat implicitly in terms of a separation between eigenvalue and eigenvector dynamics~(\cite[Section 3]{Holcomb});  these conditions do not hold for the G$\beta$E diffusion.

In the range $\beta \in [0,2]$ a model for equation~\eqref{e:eigenvalueeq} inspired by free probability has been constructed by Allez, Bouchaud and Guionnet~\cite{AG2,AG1}. They construct a stochastic process $S_t$ of real, symmetric matrices whose eigenvalues satisfy~\eqref{e:eigenvalueeq}. Roughly, the process $S_t$ is a scaling limit that interpolates between free convolution and standard convolution steps. Despite the narrower range of $\beta$, and a different matrix model, this work contains certain observations that reappear in~\cite{Holcomb}.

Thus, the existence of natural time-dependent matrix models whose eigenvalues satisfy~\eqref{e:eigenvalueeq} for arbitrary $\beta\in (0,\infty]$ is not fully settled. This is the question we address. 

The main contribution in this work is a geometric interpretation of equation~\eqref{e:eigenvalueeq}. For each $\beta \in (0,\infty]$, we construct a stochastic process $M_t$ in the space of Hermitian matrices (equation~\eqref{e:projectedBM} below) whose eigenvalue process has the same law as the solutions to \eqref{e:eigenvalueeq}. The main new tool in our approach is Riemannian geometry. Specifically, we use Riemannian submersions of group orbits and a probabilistic interpretation of mean curvature to obtain equation~\eqref{e:eigenvalueeq}. The use of Riemannian submersion allows us to view $\beta$ as a parameter that describes an anisotropic splitting between noise in the tangent and normal directions (not an inverse temperature, as in Dyson's work). A similar role for $\beta$ has been observed by Holcomb and Paquette~\cite[Thm.7]{Holcomb}; our approach provides a systematic geometric explanation for its importance. Second, we show that the Coulombic repulsion in equation~\eqref{e:eigenvalueeq} corresponds to the mean curvature of group orbits. This is not a lucky accident: it is a general principle corresponding to the gradient descent of  Boltzmann entropy for group orbits.

In order to explain the main new ideas in the simplest terms, we focus on the explanation of the model, relying on previous work on well-posedness for Dyson Brownian motion and standard calculations in random matrix theory to minimize technicalities. The result in this paper is part of an effort by the authors to develop previously unnoticed connections between three well-studied problems: the construction of Brownian motion on Riemannian manifolds, Dyson Brownian motion, and the isometric embedding problem for Riemannian manifolds. At present, this interplay provides a new formulation of the embedding problem for Riemannian manifolds~\cite{IM1}, new interacting particle systems akin to Dyson Brownian motion~\cite{M-Yu}, and a systematic derivation of SDE for eigenvalue processes of other classes of random matrices using Riemannian submersion~\cite{huang2022eigenvalue}.

\section{Statement of results}
\subsection{The model}
Let $\Hn$ and $\An$ denote the spaces of Hermitian and anti-Hermitian matrices respectively. We equip these spaces with the Frobenius norm $\|M\|^2 = \Tr(MM^*)$. Consider the smooth group action of the unitary group $\Un$ on the open, dense submanifold $V\subset \Hn$ of Hermitian matrices with simple spectrum given by 
\[ \theta:\Un\times V \to V,\, (Q,M)\mapsto \theta(Q,M) = \theta_M(Q) = QMQ^{*}\,.\]
Since $\Un$ is compact the isospectral orbits $\Sigma_M := \theta(\Un,M)$ are embedded smooth submanifolds of $V$ (see Lemma~\ref{lemma:submanifold} below). Consequently, at any point $M\in V$ the tangent space $T_M V = \Hn $ splits into the $n^{2}-n$ dimensional tangent space $T_M\Sigma_M$ to the isospectral orbit through $M$ and the $n$-dimensional normal space  $N_M\Sigma_M$. 
Denote by $P_M:\Hn \to T_M\Sigma_M$ and $P^{\perp}_M:\Hn \to N_M\Sigma_M$ the respective orthogonal projections. 

We construct a process $\{M_t\}_{t\geq 0}$ by a suitable projection of standard Brownian motion on $\Hn$ onto the tangent and normal spaces to isospectral orbits. More precisely, assume given $M_0 \in V$, let $\{X_t\}_{t\geq 0}$ be a standard Brownian motion on $\Hn$ starting at $X_0\in V$, and consider  the \Ito\/ SDE 
\begin{equation}\label{e:projectedBM}
 dM_t = P_{M_t}dX_t + \sqrt{\frac{2}{\beta}} P^{\perp}_{M_t} dX_t\,.
\end{equation}
Using an explicit description of the projection operators and standard SDE theory one can show (Lemma \ref{l:existence}) that for every $\beta >0$ there exists a stopping time $\tau_\beta$ and a solution $M_t$ of \eqref{e:projectedBM} on $[0,\tau_\beta)$.  We then have
\begin{theorem}\label{t:projectedBM}
The eigenvalues of $M_t$ solve the equation \eqref{e:eigenvalueeq} for $t\in [0,\tau_\beta)$.
\end{theorem}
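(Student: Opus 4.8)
The plan is to apply \Ito's formula to the eigenvalue maps $\lambda_j(M)$, which are smooth on $V$, and to read off that the resulting SDE is exactly \eqref{e:eigenvalueeq}. The computation splits into identifying the martingale (noise) part and the drift part, and for each I would exploit the orthogonal decomposition $\Hn = T_M\Sigma_M \oplus N_M\Sigma_M$ together with the explicit descriptions of $P_M$ and $P^\perp_M$ that (by the stated Lemma~\ref{l:existence}) are already available. Concretely, if $M = Q\,\diag(\lambda_1,\dots,\lambda_n)\,Q^*$ with $\lambda_1<\dots<\lambda_n$, then the normal space $N_M\Sigma_M$ is spanned by the orthonormal frame $E_k := Q e_k e_k^* Q^*$ (the spectral projections), while $T_M\Sigma_M$ is spanned by the off-diagonal rotations $Q(e_j e_k^* + e_k e_j^*)Q^*$ and $iQ(e_j e_k^* - e_k e_j^*)Q^*$. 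Since $\lambda_k = \Tr(E_k M) = \langle E_k, M\rangle$, the differential of $\lambda_k$ annihilates the tangent directions (a standard fact: moving along an isospectral orbit does not change eigenvalues) and acts as $\langle E_k,\cdot\rangle$ on the normal directions.

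For the martingale part: by \Ito, $d\lambda_k = \langle \nabla\lambda_k(M_t), dM_t\rangle + \tfrac12(\text{quadratic-variation terms})$. From \eqref{e:projectedBM}, $dM_t = P_{M_t}dX_t + \sqrt{2/\beta}\,P^\perp_{M_t}dX_t$, and since $\nabla\lambda_k = E_k \in N_M\Sigma_M$ is orthogonal to the range of $P_{M_t}$, only the normal part survives: $\langle E_k, dM_t\rangle = \sqrt{2/\beta}\,\langle E_k, dX_t\rangle$. Because $\{E_k\}$ is an orthonormal family in $\Hn$ and $X_t$ is standard Brownian motion on $\Hn$, the processes $\widetilde B_k := \int \langle E_{k}, dX\rangle$ are, by \Ito's representation / Lévy's characterization, $n$ independent standard real Wiener processes. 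This yields the noise term $\sqrt{2/\beta}\,d\widetilde B_k$, matching \eqref{e:eigenvalueeq}; note the tangential noise contributes nothing to first order, which is the geometric meaning of $\beta$ as an anisotropy parameter.

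For the drift part: the quadratic-variation correction in \Ito's formula involves the Hessian of $\lambda_k$ contracted against the diffusion coefficient. The key identity — which is where the Coulomb kernel appears — is that $\tfrac12\,\Delta_{\Sigma_M}\lambda_k$, the tangential Laplacian of the $k$-th eigenvalue, equals $\sum_{j\neq k}\frac{1}{\lambda_k-\lambda_j}$. Equivalently, the normal component of the mean curvature vector of $\Sigma_M$ in the $E_k$-direction is $\sum_{j\neq k}(\lambda_k-\lambda_j)^{-1}$; this is precisely the ``mean curvature $=$ Coulomb repulsion'' principle advertised in the introduction, and is proved by a direct second-order expansion of $\lambda_k(M + \varepsilon V)$ along a tangent vector $V = Q(e_je_k^* + e_ke_j^*)Q^*$ using second-order perturbation theory for simple eigenvalues, which gives a contribution $\varepsilon^2/(\lambda_k - \lambda_j)$ per off-diagonal pair. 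The normal part of $X_t$ (scaled by $2/\beta$) contributes $\tfrac{2}{\beta}\cdot\tfrac12\,\mathrm{tr}(P^\perp_{M}\,\Hess\lambda_k\,P^\perp_M)$; but $\Hess\lambda_k$ restricted to the normal space vanishes (the eigenvalues depend linearly on $M$ in the normal/diagonal directions, since $\lambda_k$ is a linear functional there), so this term is $0$. Hence only the tangential piece $\tfrac12\,\mathrm{tr}(P_M\,\Hess\lambda_k\,P_M) = \sum_{j\neq k}(\lambda_k-\lambda_j)^{-1}$ survives, independent of $\beta$. Combining the two parts gives $d\lambda_k = \sqrt{2/\beta}\,d\widetilde B_k + \sum_{j\neq k}\frac{dt}{\lambda_k-\lambda_j}$ on $[0,\tau_\beta)$, which is \eqref{e:eigenvalueeq} after relabeling, and the $\widetilde B_k$ are independent Wiener processes as required; uniqueness in law of the solution (well-posedness of Dyson Brownian motion, quoted from the literature) then identifies the law.

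The main obstacle is the Hessian computation — establishing $\tfrac12\,\mathrm{tr}(P_M\,\Hess\lambda_k\,P_M) = \sum_{j\neq k}(\lambda_k-\lambda_j)^{-1}$ cleanly, i.e. organizing the second-order perturbation theory so that cross terms between different tangent basis vectors vanish and the diagonal terms sum correctly, and checking that one does not pick up spurious drift from the $\beta$-dependent normal noise (which requires knowing $\Hess\lambda_k$ kills normal directions) or from the Christoffel-type correction implicit in writing \Ito's formula in ambient coordinates versus on the orbit. A secondary technical point is to confirm that $P_{M_t}$, $P^\perp_{M_t}$, and $E_k(M_t)$ are sufficiently smooth in $M$ on $V$ (true since the spectrum stays simple up to $\tau_\beta$) so that \Ito's formula applies with no boundary issues before the stopping time.
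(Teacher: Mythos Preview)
Your proposal is correct and follows essentially the same route as the paper: apply \Ito's formula to the smooth eigenvalue functions on $V$, use that $\nabla\lambda_k$ lies in the normal space (so only the $\sqrt{2/\beta}\,P^\perp$ noise survives in the martingale part, with L\'evy's characterization giving independent Brownian motions), and use second-order perturbation theory (the Hadamard variation formulae) to see that the Hessian vanishes in normal directions while its tangential trace gives exactly the Coulomb sum. The paper carries out the same computation by expanding in the standard orthonormal basis $\{E_\alpha\}$ of $\Hn$ rather than directly in the spectral-projection frame, but the content is identical.
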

The projection operators are smooth when the spectrum is simple. Since the eigenvalues do not collide when $\beta\geq 1$, a simple bootstrap argument shows that when $\beta \geq 1$, the stopping time $\tau_\beta=\infty$ (see Lemma~\ref{l:existence}).

Theorem~\eqref{t:projectedBM} may be established in a direct way once one has identified the \Ito\/ equation~\eqref{e:projectedBM}. The main insight underlying equation~\eqref{e:projectedBM}, and thus Theorem~\ref{t:projectedBM}, is a probabilistic interpretation of mean curvature. Let us now explain this idea.

\subsection{Brownian motion on Riemannian manifolds and mean curvature}
There are two standard constructions of Brownian motion on Riemannian manifolds using SDEs, referred to as the intrinsic and extrinsic constructions respectively~\cite{Hsu,Ikeda}. The extrinsic construction goes as follows. Assume $\mathcal{M}$ is a smooth $d$-dimensional manifold and assume given a smooth embedding $u: \mathcal{M} \to \mathbb{R}^q$. Then Brownian motion on the embedded submanifold $\Sigma=u(\mathcal{M})$ may be constructed as the solution to the Stratonovich equation
\begin{equation}
    \label{eq:strat-BM}
    dZ_t = P_{Z_t} \circ dW_t 
\end{equation}
where $P_{Z}$ is the orthonormal projection onto $T_Z\Sigma$ in $\mathbb{R}^q$ and $W_t$ is a standard Wiener process in $\mathbb{R}^q$~\cite{Hsu}. 

The use of the Stratonovich formulation is crucial when one studies stochastic processes on manifolds, since it accounts naturally for invariance under coordinate transformations. But equation~\eqref{eq:strat-BM} also admits the equivalent \Ito\/ formulation
\begin{equation}
    \label{eq:strat-BM2}
    dZ_t = P_{Z_t} dW_t + \frac{1}{2}H(Z_t) \, dt
\end{equation}
where $H(Z)$ is the mean curvature vector of the embedding $u$ at the point $Z$. This identity is due to Stroock~\cite[Thm 4.4.2]{Stroock1}; it was rediscoved by two of the authors in their work on the isometric embedding problem~\cite[Thm.2]{IM1}. 

The mean curvature vector of an embedding is defined\footnote{In differential geometry , one frequently finds an additional normalization $H=\frac{1}{n}\mathrm{tr}{II}$. We do not use this normalization factor, adopting the convention of most texts in geometric analysis.} as the trace of the second fundamental form, but equations~\eqref{eq:strat-BM} and \eqref{eq:strat-BM2} show that it may be approached directly from SDE theory. We obtain equation~\eqref{eq:strat-BM2} by beginning with~\eqref{eq:strat-BM}, using the 
conversion rule between the \Ito\/ and Stratonovich formulations to compute the \Ito\/ correction, recognizing finally that the \Ito\/ correction has a fundamental geometric meaning. A related identity involving mean curvature in the case of a  Riemannian submersion was obtained by Pauwels~\cite{Pauwels}.

The intuitive content of equation~\eqref{eq:strat-BM2} is that stochastic fluctuations in the tangent space give rise to a `centrifugal force' given by the mean curvature. Let us illustrate this idea with an example. Let $\Sigma$ be a sphere of radius $r$ in $\R^q$. We compute the projections explicitly, to see that equations~\eqref{eq:strat-BM} and~\eqref{eq:strat-BM2} take the form 
\begin{equation}
    \label{eq:stroock1}
    dZ_t = \left(I  - \frac{Z_t Z_t^T}{|Z_t|^2} \right)\circ dW_t, \quad   dZ_t = \left(I  - \frac{Z_t Z_t^T}{|Z_t|^2} \right) \, dW_t -\frac{q-1}{2r}\frac{Z_t}{|Z_t|}\,dt. 
\end{equation}
The Stratonovich form ensures that the constraint $|Z_t|=r$ holds for all $t$. 
The `centrifugal force' is $(q-1)/2r$ and it arises as follows. If we had naively attempted to construct Brownian motion on the sphere with the \Ito\/ SDE 
\begin{equation}
    \label{eq:stroock3}
    dZ_t = \left(I  - \frac{Z_t Z_t^T}{|Z_t|^2} \right) \, dW_t, \quad\text{we would find}\quad  d|Z_t| = \frac{q-1}{2r|Z_t|}\, dt. 
\end{equation}
Thus, the radial process $|Z_t|$ solves a {\em deterministic\/} equation, even though the evolution of $Z_t$ is purely stochastic. Further, while each point $Z_t$ moves tangentially to the sphere of radius $|Z_t|$, this evolution has the effect of pushing spheres outwards normally by minus a half times the mean curvature. (Observe that the mean curvature vector for the sphere points inward). 

\subsection{Motion by mean curvature}
This observation acquires greater depth when we recall the concept of motion by mean curvature.  A family of immersions $u: \mathcal{M} \times [0,T] \to \mathbb{R}^q$ is said to evolve by motion by mean curvature on the time interval $[0,T]$ if the  velocity of each point $u(x,t)$ is $H(u(x,t))$ for $t \in [0,T]$. Motion by mean curvature has been extensively studied in geometric analysis~\cite{Colding,Giga}. It is related to Dyson Brownian motion as follows. 

\begin{theorem}\label{t:mcf}
Assume $\beta=\infty$ in equation~\eqref{e:projectedBM}. The eigenvalues of the process solving $dM_t = P_{M_t} dX_t$ evolve deterministically by Coulombic repulsion for $t \in [0,\infty)$. Moreover, the corresponding isospectral orbits $\Sigma_{M_t}$ move by minus a half times the mean curvature. 
\end{theorem}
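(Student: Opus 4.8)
The plan is to handle the two assertions in turn. \emph{The eigenvalue process.} Putting $\beta=\infty$ degenerates \eqref{e:eigenvalueeq} to the autonomous ODE $\dot\lambda_j=\sum_{k\neq j}(\lambda_j-\lambda_k)^{-1}$ on the Weyl chamber $\mathcal{W}_n$, and \eqref{e:projectedBM} to $dM_t=P_{M_t}\,dX_t$. By Theorem~\ref{t:projectedBM} the eigenvalues of $M_t$ solve that ODE; its right-hand side is locally Lipschitz on $\mathcal{W}_n$, so the solution issued from the (deterministic) initial spectrum $\lambda(0)$ is unique, hence the eigenvalue path is deterministic, and since the Coulomb flow never leaves $\mathcal{W}_n$ (the eigenvalues repel, equivalently $\tau_\infty=\infty$ because $\infty\geq1$) it runs on all of $[0,\infty)$. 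This is the first claim. The mechanism is that the normal noise $\sqrt{2/\beta}\,P^{\perp}_{M_t}dX_t$, which would randomize the spectrum, has been switched off, while the tangential noise $P_{M_t}\,dX_t$ still moves $M_t$ randomly inside its orbit.

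\emph{Mean curvature of an isospectral orbit.} The crux is the identity $H(\Lambda)=-2\,\diag\!\big(\sum_{k\neq j}(\lambda_j-\lambda_k)^{-1}\big)$ for $\Lambda=\diag(\lambda_1,\dots,\lambda_n)$. Since $M\mapsto QMQ^{*}$ is an isometry of $(\Hn,\|\cdot\|)$ carrying the orbit $\Sigma_\Lambda$ to itself and $\Lambda$ to $Q\Lambda Q^{*}$, mean curvature is equivariant, $H(Q\Lambda Q^{*})=QH(\Lambda)Q^{*}$, so it suffices to compute at a diagonal $\Lambda$, where $T_\Lambda\Sigma_\Lambda=\{[A,\Lambda]:A\in\An\}$ is the space of off-diagonal Hermitian matrices and $N_\Lambda\Sigma_\Lambda$ the space of real diagonal matrices. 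I would use the Gauss formula in curve form: for $v=[A,\Lambda]$ the curve $s\mapsto e^{sA}\Lambda e^{-sA}$ has ambient acceleration $[A,[A,\Lambda]]$ at $s=0$, so its second fundamental form is $P^{\perp}_\Lambda[A,[A,\Lambda]]$; summing over the orthonormal basis $\{\tfrac1{\sqrt2}(E_{jk}+E_{kj}),\ \tfrac{i}{\sqrt2}(E_{jk}-E_{kj})\}_{j<k}$ of $T_\Lambda\Sigma_\Lambda$ (taking for each the unique off-diagonal $A$ producing it) reduces $H(\Lambda)$ to a few commutator evaluations and gives the stated formula. A more conceptual variant, matching the discussion in the introduction, uses the orbit volume $\Vol(\Sigma_\lambda)=c\prod_{j<k}(\lambda_k-\lambda_j)^{2}$ (the Jacobian in Weyl's integration formula) together with the first variation of area: the variation of $\Sigma_\lambda$ induced by $\lambda\mapsto\lambda+\varepsilon w$ displaces each point $Q\Lambda Q^{*}$ by the normal vector $Q\,\diag(w)\,Q^{*}$, so by equivariance $-\langle H(\Lambda),\diag(w)\rangle=\partial_\varepsilon|_0\log\Vol(\Sigma_{\lambda+\varepsilon w})=2\sum_{j<k}\tfrac{w_k-w_j}{\lambda_k-\lambda_j}$ for all $w$, yielding the same $H(\Lambda)$. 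Either way $-\tfrac12 H(\Lambda)=\diag\!\big(\sum_{k\neq j}(\lambda_j-\lambda_k)^{-1}\big)$ is exactly the Coulomb velocity field, so the Coulomb flow is the gradient ascent of the Boltzmann entropy $\log\Vol(\Sigma_\lambda)$ — the ``not a lucky accident'' remark.

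\emph{The motion of the orbits.} Let $\Lambda(t)=\diag(\lambda(t))$ be the deterministic Coulomb trajectory and write $M_0=Q_0\Lambda(0)Q_0^{*}$; define $u_t\colon\Sigma_{M_0}\to\Hn$ by $u_t(Q\Lambda(0)Q^{*})=Q\Lambda(t)Q^{*}$. This is well defined and smooth because the $\lambda_j(t)$ remain distinct, so all $\Lambda(t)$ share the diagonal torus as stabilizer; its image is the orbit $\Sigma_{\Lambda(t)}=\Sigma_{M_t}$; and $\partial_t u_t(x)=Q\dot\Lambda(t)Q^{*}=Q\,\diag\!\big(\sum_{k\neq j}(\lambda_j-\lambda_k)^{-1}\big)Q^{*}$ lies in $N_{u_t(x)}\Sigma_{\Lambda(t)}$ and, by the formula above and equivariance, equals $-\tfrac12 H(u_t(x))$. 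Hence the family of orbits evolves with velocity precisely $-\tfrac12 H$. (Conceptually this is Stroock's identity \eqref{eq:strat-BM2} at work: $dM_t=P_{M_t}\,dX_t$ reads $dM_t=P_{M_t}\circ dX_t-\tfrac12 H(M_t)\,dt$, i.e. Brownian motion tangent to the current orbit plus the normal drift $-\tfrac12 H$, exactly as $|Z_t|$ drifts outward in \eqref{eq:stroock3}.)

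I expect the mean-curvature identity to be the one genuinely substantive step: rigging up the orthonormal frame and the curve form of the Gauss equation, and verifying that the normal projection of $\sum_i[A_i,[A_i,\Lambda]]$ collapses to the Coulomb vector field. The volume route trades this for two standard but non-trivial facts — the orbit volume formula and the first variation of area — plus the check that the variation from $\lambda\mapsto\lambda+\varepsilon w$ is purely normal. The remaining points (determinism of the spectrum, well-definedness of $u_t$, the equivariances) are routine.
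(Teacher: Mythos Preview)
Your proposal is correct, and your primary route---the equivariance $H(Q\Lambda Q^{*})=QH(\Lambda)Q^{*}$, the curve-form second fundamental form $\mathrm{II}_\Lambda(X,X)=P^{\perp}_\Lambda[A,[A,\Lambda]]$ for $X=[A,\Lambda]$, and the explicit orthonormal frame $\{\tfrac{1}{\sqrt2}(E_{jk}+E_{kj}),\,\tfrac{i}{\sqrt2}(E_{jk}-E_{kj})\}$---is exactly the paper's proof (the paper also re-derives the eigenvalue ODE rather than citing Theorem~\ref{t:projectedBM}, but that is the same calculation). Your alternative via $\Vol(\Sigma_\lambda)=c\prod_{j<k}(\lambda_k-\lambda_j)^{2}$ and the first variation of area is alluded to in the discussion following the theorem (the Pacini reference) but not carried out in the proof; it is a valid and slightly more conceptual substitute for the frame computation.
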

A minor difference with immersions in $\R^q$ is that the mean curvature vector is the trace of the second fundamental form with respect to the Frobenius metric on $\Hn$. The flow is described precisely in equation~\eqref{e:MCF} below.

Theorem~\ref{t:mcf} corresponds to a gradient descent of Boltzmann entropy in the following sense. As in the example above, we see that each matrix $M_t$ on the group orbit moves tangentially (and stochastically), whereas the group orbit as a whole evolves normally (and deterministically) by minus a half times the mean curvature. The group orbits foliate the space $\Hn$ and the group action is an isometry. In this setting, it is known that the mean curvature at each point on the group orbit $\Sigma_{M_t}$ is the gradient (with respect to the Frobenius norm) of $-\log \mathrm{vol}(\Sigma_{M_t}))$~\cite[p.3350]{Pacini}. The volume of the group orbit depends only on the eigenvalues $\Lambda_t$ of $M_t$. By interpreting $\Lambda_t$ as a macrostate, and each point $M_t$ as a microstate, we see that $\log\mathrm{vol}(\Sigma_{\Lambda_t}))$ may be interpreted as a Boltzmann entropy obtained using the theory of Brownian motion on Riemannian manifolds. Further analysis from this viewpoint may be found in~\cite{huang2022eigenvalue,IM1,M-Yu}.

\section{Proofs}
\label{sec:proofs}
We provide self-contained proofs of Theorem~\ref{t:projectedBM} and Theorem~\ref{t:mcf}. An alternative approach, which uses Pauwel's theorem on the relationship between Riemannian submersion and Brownian motion on Riemannian manifolds, and applies to other eigenvalue processes, has been pursued by the first author~\cite{huang2022eigenvalue}. 



\begin{lemma} 
\label{lemma:submanifold}
Assume $M \in V$. The group orbit $\Sigma_M$ is an embedded submanifold in $\Hn$ of dimension $n^{2}-n$. 
\end{lemma}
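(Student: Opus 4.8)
The plan is to realize $\Sigma_M$ as the image of a group action by a compact Lie group and invoke the standard orbit theorem from the theory of Lie group actions. First I would fix $M \in V$ and consider the stabilizer subgroup $\mathrm{Stab}(M) = \{Q \in \Un : QMQ^* = M\}$. Since $M$ has simple spectrum, a unitary $Q$ commutes with $M$ if and only if $Q$ preserves each (one-dimensional) eigenspace of $M$, so $\mathrm{Stab}(M)$ is conjugate to the diagonal torus $\Tn \subset \Un$; in particular it is a closed (hence embedded Lie) subgroup of dimension $n$. The orbit-stabilizer theorem for smooth actions of compact Lie groups then gives that the orbit map $\Un/\mathrm{Stab}(M) \to \Sigma_M$, $Q\,\mathrm{Stab}(M) \mapsto QMQ^*$, is a diffeomorphism onto an embedded submanifold of $V$ (equivalently of $\Hn$), of dimension $\dim \Un - \dim \mathrm{Stab}(M) = n^2 - n$.

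The key steps, in order, are: (i) identify $\mathrm{Stab}(M)$ explicitly using simplicity of the spectrum and show it is closed; (ii) conclude $\Un/\mathrm{Stab}(M)$ is a smooth manifold of dimension $n^2-n$ and the orbit map is an injective immersion; (iii) upgrade "injective immersion" to "embedding" by compactness — the continuous bijection from the compact space $\Un/\mathrm{Stab}(M)$ onto $\Sigma_M \subset \Hn$ is a homeomorphism onto its image, and an injective immersion that is a topological embedding is a smooth embedding. For the dimension count I would compute the tangent space directly as a sanity check: differentiating $\theta_M(e^{tA}) = e^{tA}Me^{-tA}$ at $t=0$ gives $T_M\Sigma_M = \{[A,M] : A \in \An\} = \{AM - MA : A \text{ anti-Hermitian}\}$, and the kernel of $A \mapsto [A,M]$ on $\An$ is exactly the anti-Hermitian matrices commuting with $M$, which (by simplicity of the spectrum) is the $n$-dimensional space of anti-Hermitian matrices diagonal in the eigenbasis of $M$; hence $\dim T_M\Sigma_M = \dim \An - n = n^2 - n$.

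The main obstacle — really the only non-formal point — is step (iii), ensuring the orbit is \emph{embedded} rather than merely immersed. This is where compactness of $\Un$ is essential (the usual cautionary example being a dense line on a torus, which is an immersed but not embedded orbit of $\R$). Once one knows the orbit map descends to a continuous bijection out of the compact manifold $\Un/\mathrm{Stab}(M)$, the closed-map argument closes the gap. Everything else is a direct application of the quotient-manifold theorem and the orbit–stabilizer theorem for compact group actions, so I would state those as the ingredients and keep the write-up brief.
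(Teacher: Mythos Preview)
Your proposal is correct and follows essentially the same route as the paper: both identify the stabilizer as an $n$-torus (the paper first passes to a diagonal representative $\Lambda$, you work at $M$ directly), apply the quotient-manifold theorem to get $\Un/\mathrm{Stab}(M)$ as a smooth $(n^2-n)$-manifold, and invoke compactness of $\Un$ to conclude the descended orbit map is a smooth embedding. Your write-up is in fact more explicit than the paper's about why compactness upgrades the injective immersion to an embedding, and the tangent-space sanity check you include anticipates the content of the paper's next lemma.
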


\begin{proof}
Fix a diagonal matrix $\Lambda\in V$ with the same spectrum, so that $\Sigma_M = \Sigma_\Lambda$. As can be checked, the isotropy group $G_\Lambda = \{Q\in \Un: Q\Lambda Q^{*} = \Lambda\}$ is given by diagonal matrices with entries $\mu_i \in \mathbb S^{1}$ and is hence isomorphic to $\Tn$. Since $G_\Lambda$ is a closed subgroup of $\Un$, the left coset space $\Un/G_\Lambda$ is a smooth manifold of dimension $n^{2}-n$. The orbit map $\theta_\Lambda$ then descends to the quotient and gives a smooth embedding $\theta_\Lambda: \Un/G_\Lambda\to V$. Hence $\dim \Sigma_M = \dim \Un/G_\Lambda   = n^{2}-n$. 
\end{proof}

In the proof of Theorems \ref{t:projectedBM} and \ref{t:mcf} it is convenient to have an explicit description of the tangent and normal spaces to an orbit. 

\begin{lemma}\label{l:tangent} Fix a diagonal matrix $\Lambda\in V$ and $M= Q\Lambda Q^{* }\in \Sigma_\Lambda$. It then holds 
\begin{equation}\label{e:tangent} T_M\Sigma_\Lambda = \{ [QAQ^{*},M]:A\in \An \text{ with } A_{jj}=0 \text{ for } j=1,\ldots,n\}\,,\end{equation}
and 
\begin{equation}\label{e:normal} N_M\Sigma_{\Lambda } = \{Q D Q^{*}: D=\diag(\mu_1,\ldots,\mu_n) \text{ with } \mu_j\in \R\text{ for } j= 1,\ldots,n\}\,.\end{equation}
\end{lemma}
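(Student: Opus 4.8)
The plan is to compute the tangent space to the orbit directly from the orbit map and then identify the normal space by an orthogonality computation with respect to the Frobenius inner product. Since $\Sigma_\Lambda = \theta(\Un,\Lambda)$ and $\Un$ is a Lie group with Lie algebra $\An$, the tangent space at $M = Q\Lambda Q^*$ is the image of the differential of $\theta_\Lambda$. Concretely, for a curve $Q_s = Q\exp(sB)$ with $B \in \An$, differentiating $\theta_\Lambda(Q_s) = Q_s \Lambda Q_s^*$ at $s=0$ gives $Q(B\Lambda - \Lambda B)Q^* = Q[B,\Lambda]Q^* = [QBQ^*, M]$. Hence $T_M\Sigma_\Lambda = \{[QBQ^*, M] : B \in \An\}$. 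To match the stated form, note that $[B,\Lambda]_{jj} = 0$ always, so the diagonal part of $B$ contributes nothing: writing $B = A + \text{diag}$, we have $[QBQ^*,M] = [QAQ^*,M]$ with $A_{jj} = 0$. This gives the inclusion $\supseteq$ in \eqref{e:tangent}, and the reverse inclusion is immediate since any such $[QAQ^*,M]$ arises this way; a dimension count confirms these are all of $T_M\Sigma_\Lambda$, matching $\dim\Sigma_\Lambda = n^2 - n$ from Lemma~\ref{lemma:submanifold}, since $\{A \in \An : A_{jj}=0\}$ has real dimension $n^2 - n$ and $A \mapsto [QAQ^*,M]$ is injective on this space (as $[A,\Lambda]=0$ with $A$ off-diagonal and $\Lambda$ having simple spectrum forces $A=0$).

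Next I would establish \eqref{e:normal} by showing the claimed set of matrices $QDQ^*$ with $D$ real diagonal is orthogonal to $T_M\Sigma_\Lambda$ and has the complementary dimension $n$. For orthogonality, using unitary invariance of the Frobenius inner product, $\langle QDQ^*, [QAQ^*,M]\rangle = \langle D, [A,\Lambda]\rangle = \Tr(D^*[A,\Lambda]) = \Tr(D[A,\Lambda])$. Since $D$ is diagonal and $[A,\Lambda]$ has zero diagonal entries, this trace vanishes. Because the space of real diagonal $D$ has dimension $n$ and $T_M V = \Hn = T_M\Sigma_\Lambda \oplus N_M\Sigma_\Lambda$ with $\dim N_M\Sigma_\Lambda = n^2 - (n^2-n) = n$, the orthogonality plus dimension count forces equality in \eqref{e:normal}.

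I do not expect a genuine obstacle here; the result is essentially a bookkeeping exercise in the representation of tangent vectors to group orbits. The one point requiring a little care is the reduction from general $B \in \An$ to off-diagonal $A$: one must observe that the diagonal part of $B$ lies in the Lie algebra of the isotropy group $G_\Lambda \cong \Tn$ (identified in Lemma~\ref{lemma:submanifold}) and hence is annihilated by $d\theta_\Lambda$, so the parametrization by $\{A \in \An : A_{jj}=0\}$ is exactly the parametrization by $\Un/G_\Lambda$ at the Lie algebra level. A second small subtlety is that one should note $D$ real (rather than complex) diagonal is the correct condition: $QDQ^*$ must be Hermitian since $N_M\Sigma_\Lambda \subset \Hn$, which forces $D = D^*$, i.e. $\mu_j \in \R$. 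Both points are routine, and the verification of orthogonality via $\Tr(D[A,\Lambda]) = 0$ together with the dimension count completes the argument cleanly.
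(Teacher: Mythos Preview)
Your proposal is correct and follows essentially the same approach as the paper: differentiate curves of the form $\exp(sA)$ through the orbit map to produce the commutators, observe that the diagonal part of $A$ is killed (the isotropy direction), use a dimension count against Lemma~\ref{lemma:submanifold}, and identify the normal space by noting $[A,\Lambda]$ has zero diagonal so real diagonal matrices are Frobenius-orthogonal to it. The only cosmetic difference is that the paper first establishes the description at $\Lambda$ and then transports to $M=Q\Lambda Q^*$ via the isometric action $\theta_Q$, whereas you work directly at $M$ from the start; the content is the same.
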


\begin{proof} Fix any skew-hermitian $A\in \An $ and consider the curve 
\[\gamma(t) = \exp(tA)\Lambda \exp(-tA)\,.\] 
Since $\gamma(t)\in \Sigma_\Lambda$ for any $t\in \R$ and $\gamma(0)=\Lambda$, we find that
\[ [A,\Lambda]=\gamma'(0) \in T_\Lambda \Sigma_\Lambda\,.\] 
Since $[iD,\Lambda]=0$ for any real diagonal matrix $D$ it follows by counting dimensions that 
\[ T_\Lambda \Sigma_\Lambda = \{[A,\Lambda]: A\in \An \text{ with } A_{jj}=0 \text{ for } j=1,\ldots,n\}.\,.\]
Observing that $[A,\Lambda]$ has empty diagonal for any $A\in\An$ we find that 
\[ N_\Lambda\Sigma_\Lambda = \{D=\diag(\mu_1,\ldots,\mu_n): \mu_j\in \R\text{ for all } j= 1,\ldots,n\}\,.\]
Since the action $\theta$ restricts to a transitive action on the orbits and $\theta_Q:\Sigma_\Lambda\to \Sigma_\Lambda $ is a local diffeomorphism for any $Q\in \Un$ the claim follows.
\end{proof}

\begin{lemma}\label{l:existence}
For every  $\beta>0$ there exists a stopping time $\tau_\beta$ and a solution $M_t$ of \eqref{e:projectedBM} on $[0,\tau_\beta)$. Moreover, for $\beta\geq 1 $ it holds $\tau_\beta = +\infty$ almost surely. 
\end{lemma}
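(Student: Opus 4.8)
The plan is to obtain local existence and uniqueness from standard SDE theory by rewriting \eqref{e:projectedBM} as an SDE with locally Lipschitz coefficients on the open manifold $V$, and then to upgrade to global existence for $\beta\geq 1$ by a bootstrap that feeds back the known non-collision of Dyson Brownian motion.

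First I would make the projection operators explicit. Using Lemma~\ref{l:tangent}, for $M=Q\Lambda Q^*$ the normal space $N_M\Sigma_\Lambda$ is spanned by the orthonormal frame $Qe_je_j^*Q^*$, so $P^\perp_M$ has the closed form $P^\perp_M(Y)=\sum_j \langle Y, Qe_je_j^*Q^*\rangle\, Qe_je_j^*Q^*$, equivalently $P^\perp_M(Y) = Q\,\diag(Q^*YQ)\,Q^*$, and $P_M = \Id - P^\perp_M$. The point is that these depend smoothly on $M$ on all of $V$: the spectral projections $Qe_je_j^*Q^*$ are the eigenprojections of $M$, and the map $M\mapsto(\text{eigenprojections})$ is real-analytic on the set of Hermitian matrices with simple spectrum (e.g. by the holomorphic functional calculus / Riesz projection formula, since the eigenvalues are separated). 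Hence $M\mapsto P_M$, $M\mapsto P^\perp_M$ are smooth on $V$, so the coefficients of \eqref{e:projectedBM} are smooth, in particular locally Lipschitz, on the open set $V\subset\Hn$. Standard existence/uniqueness theory for \Ito\/ SDEs on $\R^N$ with locally Lipschitz coefficients (localize by stopping times exhausting $V$) then yields a unique strong solution up to the exit time $\tau_\beta := \inf\{t: M_t\notin V\}$, i.e. the first time two eigenvalues of $M_t$ collide. This gives the first assertion for every $\beta>0$.

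For the second assertion, fix $\beta\geq 1$. The idea is that $\tau_\beta$ is exactly the first eigenvalue collision, and by Theorem~\ref{t:projectedBM} the eigenvalue process of $M_t$ solves \eqref{e:eigenvalueeq} on $[0,\tau_\beta)$, which for $\beta\geq 1$ is well known not to reach $\partial\mathcal{W}_n$ in finite time. Concretely: on $[0,\tau_\beta)$ the spectrum is simple, so by Theorem~\ref{t:projectedBM} the ordered eigenvalues $\lambda_1(t)<\dots<\lambda_n(t)$ form a weak solution of \eqref{e:eigenvalueeq} started in $\mathcal{W}_n$. By the classical well-posedness and non-collision theory for Dyson Brownian motion (for $\beta\geq 1$ the process never hits the walls of the Weyl chamber; see the references cited in the introduction, e.g.~\cite{AG1,Holcomb}), the solution of \eqref{e:eigenvalueeq} does not explode and $\min_{j}(\lambda_{j+1}(t)-\lambda_j(t))>0$ for all finite $t$ almost surely. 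Hence on the event $\{\tau_\beta<\infty\}$ we would have, by definition of $\tau_\beta$, that $\liminf_{t\uparrow\tau_\beta}\min_j(\lambda_{j+1}(t)-\lambda_j(t))=0$ (the eigenvalues remain bounded on compact time intervals since $\|M_t\|$ does, again by the non-explosion of \eqref{e:eigenvalueeq}), contradicting non-collision. Therefore $\tau_\beta=+\infty$ a.s.

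The main obstacle is the logical ordering: the global statement for $\beta\geq 1$ genuinely uses Theorem~\ref{t:projectedBM}, whose clean proof in turn presupposes the local solution furnished by this lemma. The resolution is that the local existence part is self-contained (it only needs smoothness of the projections on $V$), and only the global part invokes Theorem~\ref{t:projectedBM}; I would state the lemma and its proof so that this dependency is transparent and not circular. A secondary technical point worth stating carefully is that the ordered eigenvalue process is a genuine (weak) solution of \eqref{e:eigenvalueeq} with the correct Wiener process — this identification is precisely the content of Theorem~\ref{t:projectedBM} — and that one should rule out finite-time blow-up of $\|M_t\|$ separately; this follows because $\sum_j\lambda_j(t)^2=\|M_t\|^2$ and \eqref{e:eigenvalueeq} (equivalently \eqref{e:projectedBM}, whose drift is the mean curvature and whose diffusion is a projection, hence of bounded operator norm) has at most linearly growing coefficients in the relevant sense, so there is no explosion before a collision.
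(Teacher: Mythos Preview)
Your proposal is correct and follows essentially the same route as the paper: write $P^\perp_M$ in terms of the spectral projections $P_j$, invoke the Riesz contour formula to get smoothness on $V$, apply standard local SDE theory up to the first collision time, and for $\beta\geq 1$ feed Theorem~\ref{t:projectedBM} into the known non-collision of Dyson Brownian motion (the paper cites \cite{Mayerhofer} and ``McKean's argument'' rather than \cite{AG1,Holcomb}). Your discussion of the logical dependency between the local part of the lemma and Theorem~\ref{t:projectedBM}, and your explicit remark on ruling out norm blow-up before collision, are points the paper leaves implicit, but the underlying argument is the same.
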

\begin{proof}
We claim that the maps $M\mapsto P_M$ and $M\mapsto P_{M}^{\perp}$ are smooth on the open, dense subset $V\subset \mathbb{H}(n)$ of hermitian matrices with simple spectrum. The local existence then follows from standard SDE theory with $\tau_\beta$ being (bounded from below by) the first collision time of the eigenvalues. The assertion that $\tau_\beta = +\infty$ almost surely for $\beta\geq 1$ then follows from Theorem \ref{t:projectedBM} and known non-collision results of Dyson Brownian Motion for $\beta\geq 1$ (which are consequences of "McKean's argument", see e.g. \cite[Proposition 4.3]{Mayerhofer}).

To show that the projections are  smooth we fix $M = Q\Lambda Q^{*}\in V$ and $N\in \mathbb{H}(n)$. From the description in Lemma \ref{l:tangent} we find that 
\[ N = P_M(N)+P^{\perp}_M(N) = [QAQ^{*},M]+QDQ^{*}\] 
for some $A\in \mathbb{A}(n)$ with empty diagonal and $D$ diagonal matrix with real entries. Since $Q^{*}[QAQ^{*},M]Q = [A,\Lambda]$ has empty diagonal it follows that 
\[ D= \sum_{j=1}^{n} (Q^{*}N Q)_{jj}e_j\otimes e_j \,,\]
where $e_j$ is the $j$-th standard basis vector of $\R^{n}$. Therefore we have 
\[ P_M^{\perp }(N) = \sum_{j=1}^{n} (Q^{*}N Q)_{jj}Qe_j\otimes e_j Q^{*}=\sum_{j=1}^{n} \mathrm{Tr}(Q^{*}N Q e_j\otimes e_j)P_j=\sum_{j=1}^{n}\mathrm{Tr}(N P_j)P_j\,,\]
where $P_j =P_j(M) := Qe_j(Qe_j)^{*}$ is the (spectral) projection onto the eigenspace associated to eigenvalue $\lambda_j(M)$. Observe that the ordered spectrum $\lambda_1,\ldots,\lambda_n :V\to \R$ is a family of smooth functions on $V$ thanks to the implicit function theorem. Consequently, also the projection operators $P_j$ are smooth on V, which can be seen, for example, from the formula \[ P_j(M) = -\frac{1}{2\pi i } \oint_{\gamma} (M-\lambda \mathrm{Id})^{-1}\,d\lambda\,,\]
where $\gamma \subset\C $ is a Jordan curve such that $\lambda_j(M)$ is the only eigenvalue of $M$ contained in its interior (see e.g. \cite{ReedSimon}). 
\end{proof}

\begin{proof}[Proof (of Theorem \ref{t:projectedBM}):] 
We get the equation for the eigenvalues by It\^o's formula. Consider the ordered spectrum $\lambda_1,\ldots,\lambda_n : V\to \R$. These are smooth functions by the implicit function theorem and the Hadamard variation formulae show that for $A,B\in \Hn $
\begin{equation} \label{e:firstder}
D\lambda_j\vert_{Q\Lambda Q^{*}}(A) = \left (Q^{*} A Q\right )_{jj}\,,
\end{equation} 
and 
\begin{equation} \label{e:secondder}
D^{2}\lambda_j\vert_{Q\Lambda Q^{*}}(A,B) = 2\sum_{k\neq j } \frac{\left (Q^{*} A Q\right )_{kj}\left (Q^{*} B Q\right )_{kj}}{\lambda_j-\lambda_k}\,.
\end{equation} 

Let then $\{E_\alpha\}_{\alpha=1}^{n^{2}}$ be the standard basis (orthonormal with respect to the Frobenius metric) of $\Hn$, i.e.,
\[\{E_\alpha\} = \{e_j\otimes e_j, \frac{1}{\sqrt{2}}(e_k\otimes e_l + e_l\otimes e_k ), \frac{i}{\sqrt{2}} ( e_k\otimes e_l - e_l\otimes e_k)\}\,,\]
where $e_j$ is the $j$-th standard basis vector of $\R^{n}$ and $j=1,\ldots, n$, $1\leq k<l\leq n$. Then \eqref{e:projectedBM} reads 
\[ dM_t = \sum_{\alpha=1}^{n^2}\left (P_{M_t}(E_\alpha) + \sqrt{\frac{2}{\beta}} P^{\perp}_{M_t}(E_\alpha)\right ) dX_t^{\alpha}\,,\] 
where now $X_t^{\alpha}$ are jointly independent standard Wiener processes on $\R$ starting at $X_0^{\alpha}$. By It\^o's formula it follows 
\begin{align*} d\lambda_j &= \sum_{\alpha=1}^{n^2} D\lambda_j\vert_{M_t}\left (P_{M_t}(E_\alpha) + \sqrt{\frac{2}{\beta}} P^{\perp}_{M_t}(E_\alpha)\right ) dX_t^{\alpha} \\
&\quad +\frac{1}{2}\sum_{\alpha=1}^{n^{2}}D^{2}\lambda_j\vert_{M_t}\left (P_{M_t}(E_\alpha) + \sqrt{\frac{2}{\beta}} P^{\perp}_{M_t}(E_\alpha), P_{M_t}(E_\alpha) + \sqrt{\frac{2}{\beta}} P^{\perp}_{M_t}(E_\alpha)\right ) dt\,.\end{align*}

Since $P_{M_t} (E_\alpha)$ is tangent to the isospectral manifold we have $D\lambda_j\vert_{M_t}\left (P_{M_t}(E_\alpha)\right ) = 0$ for any $\alpha$. Moreover, writing $M_t= Q_t\Lambda_t Q^{*}_t$ for a fixed $t>0$ we infer from the description \eqref{e:normal} that $ Q^{*}_t P^{\perp}_{M_t}(E_\alpha)Q_t$ is diagonal and hence 
\[ D^{2}\lambda_j\vert_{M_t}\left (P_{M_t}(E_\alpha),  P^{\perp}_{M_t}(E_\alpha)\right ) = D^{2}\lambda_j\vert_{M_t}\left (P^{\perp}_{M_t}(E_\alpha),  P^{\perp}_{M_t}(E_\alpha)\right ) = 0\] 
for any $\alpha$ thanks to \eqref{e:secondder}. Consequently, 
\[d\lambda_j = \sqrt{\frac{2}{\beta}}\sum_{\alpha=1}^{n^2}D\lambda_j\vert_{M_t}\left(P^{\perp}_{M_t}(E_\alpha)\right ) dX_t^{\alpha}  + \frac{1}{2}\sum_{\alpha=1}^{n^{2}}D^{2}\lambda_j\vert_{M_t}\left (P_{M_t}(E_\alpha), P_{M_t}(E_\alpha)\right ) dt\,.\]
Observe now that 
\[Z_t :=\sum_{\alpha=1}^{n^2}\int_0^{t}D\lambda_j\vert_{M_s}\left(P^{\perp}_{M_s}(E_\alpha)\right ) dX_s^{\alpha} \] is a real-valued martingale with quadratic variation 
 \begin{align*}\langle Z \rangle _t &= \sum_{\alpha=1}^{n^{2}}\int_0^{t} \left (D\lambda_j\vert_{M_s}\left(P^{\perp}_{M_s}(E_\alpha)\right )\right )^{2}ds\ =\sum_{\alpha=1}^{n^{2}} \int_0^{t}\left (D\lambda_j\vert_{M_s}\left(E_\alpha\right )\right )^{2}ds \\
& = \sum_{\alpha=1}^{n^{2}} \int_0^{t}\left ( \left ( Q^{*}_sE_\alpha Q_s\right )_{jj}\right )^{2} ds =   \sum_{\alpha=1}^{n^{2}}\int_0^{t}\langle  E_\alpha, Q_se_j\otimes e_j Q^{* }_s\rangle ^{2}ds \\
&= \int_0^{t}|Q_se_j\otimes e_j Q^{* }_s |^{2}\,ds = t \,,
\end{align*}
since $\{E_\alpha\}$ is an orthonormal basis. By L\'evy's characterisation it therefore follows that $dZ = dB_j$ for a real-valued Brownian motion $B_j$, i.e. ,
\[ d\lambda_j = \sqrt{\frac{2}{\beta}}dB_j + \frac{1}{2}\sum_{\alpha=1}^{n^{2}}D^{2}\lambda_j\vert_{M_t}\left (P_{M_t}(E_\alpha), P_{M_t}(E_\alpha)\right ) dt
\,.\]
Now
\begin{align*} \frac{1}{2}\sum_{\alpha=1}^{n^{2}}D^{2}\lambda_j\vert_{M_t}\left (P_{M_t}(E_\alpha), P_{M_t}(E_\alpha)\right ) &=   \frac{1}{2}\sum_{\alpha=1}^{n^{2}}D^{2}\lambda_j\vert_{M_t}\left (E_\alpha,E_\alpha\right ) \\
&=  \sum_{\alpha=1}^{n^{2}}\sum_{k\neq j } \frac{\left (\left (Q^{*}_t E_\alpha Q_t\right)_{kj}\right )^{2}}{\lambda_j-\lambda_k} \\
&= \sum_{\alpha=1}^{n^{2}}\sum_{k\neq j} \frac{\langle E_\alpha, Q_te_j\otimes e_k Q^{*}_t\rangle ^{2} }{\lambda_j-\lambda_k} \\
&= \sum_{k\neq j} \frac{|Q_te_j\otimes e_k Q^{*}_t|^{2}}{\lambda_j-\lambda_k}=\sum_{k\neq j} \frac{1}{\lambda_j-\lambda_k}\,,
\end{align*}
which shows the claim. 
\end{proof}

\begin{proof}[Proof (of Theorem \ref{t:mcf}):]
Suppose that $dM_t= P_{M_t}dX_t$.  As in the proof of Theorem \ref{t:projectedBM} it follows that 
\begin{align} d\lambda_j &= \sum_{\alpha=1}^{n^2}D\lambda_j\vert_{M_t}\left (P_{M_t}(E_\alpha)\right ) dX_t^{\alpha}  +\frac{1}{2}\sum_{\alpha=1}^{n^{2}}D^{2}\lambda_j\vert_{M_t}\left (P_{M_t}(E_\alpha),P_{M_t}(E_\alpha) \right ) dt\nonumber\\
& = \sum_{k\neq j }  \frac{dt}{\lambda_j-\lambda_k} \label{e:coulombic}\,,\end{align}
which shows the deterministic Coulombic repulsion of the eigenvalues. 

We moreover claim that the orbits $\Sigma_{M_t}$ evolve by time-reversed, scaled mean curvature flow. More precisely, let $\Lambda_t$ be a diagonal matrix with the same spectrum as $M_t$, so that $\Sigma_{M_t} = \Sigma_{\Lambda_t}$. As in Lemma \ref{l:tangent} we consider the closed subgroup $T\subset \Un$ of diagonal matrices with entries in $\mathbb{S}^{1}$. We then define the family of embeddings 
\[ F: \Un/T \times [0,+\infty[ \to \Hn, F(QT,t) = Q\Lambda_tQ^{*}\,,\]
so that $\Sigma_{M_t} = F(\Un/T,t)$. We claim that 
\begin{equation}\label{e:MCF}
\frac{\partial}{\partial t}F(QT,t) = -\frac{1}{2} H_{Q\Lambda_tQ^{*}}\,,
\end{equation}
where $H_M$ is the mean curvature vector of $\Sigma_M$ at $M$, i.e., 
\[ H_M = \mathrm{trace}\, \mathrm{ II}_M\] 
for $\mathrm{II}_M: T_M\Sigma_M\times T_M\Sigma_M \to N_M\Sigma_M$ the second fundamental form of the embedding $\Sigma_M\subset \Hn$. 

First, observe that $H_{Q\Lambda Q^{*}} = QH_\Lambda Q^{*}$, so that it suffices to show \eqref{e:MCF} for $Q=Id$. Indeed, if $\{X_\alpha\} $ is an orthonormal basis of $T_\Lambda \Sigma_\Lambda$, then $Y_\alpha = QX_\alpha Q^{*} = d\theta_Q\vert_\Lambda(X_\alpha)$ is an orthonormal basis of $T_{Q\Lambda Q^{*} }\Sigma_\Lambda$, since $d\theta_Q\vert_\Lambda$ is an isometry. Thus, setting $M=Q\Lambda Q^{*}$, 
\[ H_M = \sum_{\alpha=1}^{n^{2}-n}\mathrm{II}_M(Y_\alpha,Y_\alpha)\,.\]
Since $\Hn$ is flat,  $\mathrm{II}_M(Y_\alpha,Y_\alpha)$ is given by 
\[\mathrm{II}_M(Y_\alpha,Y_\alpha) = P^{\perp}_M\left (\frac{d^{2}}{dt^{2}}\vert_{t=0}\gamma(t)\right )\,,\]
for $\gamma:I\to \Sigma_M$ with $\gamma(0)=M$ and $\gamma'(0)=Y_\alpha$. We can take $\gamma(t)= Q\tilde \gamma(t)Q^{*}$, where $\tilde \gamma :I \to \Sigma_M$  satisfies $\tilde \gamma(0)= \Lambda$ and $\tilde \gamma'(0)=X_\alpha$, so that 
\[ \mathrm{II}_M(Y_\alpha,Y_\alpha) = P^{\perp}_M\left ( Q \frac{d^{2}}{dt^{2}}\vert_{t=0} \tilde \gamma(t) Q^{*} \right ) \,.\] However, from the description \eqref{e:normal} it follows 
\[ P^{\perp}_{Q\Lambda  Q^{*}}(Q H Q^{*} ) = Q P^{\perp}_\Lambda (H) Q^{*}\] 
for any $H\in \Hn$,  from which we infer $H_{Q\Lambda Q^{*}} = QH_\Lambda Q^{*}$.

Assume therefore $Q=Id$ and fix $X=[A,\Lambda] \in T_\Lambda\Sigma_\Lambda$ (recall the description from Lemma \ref{l:tangent}). In the following, for a given $H\in\Hn$ we denote by $D(H)$ the diagonal matrix with the same diagonal as $H$.  Defining $\gamma(t) = \exp(tA)\Lambda\exp(-tA) $ for $t\in \R$ we find 
\begin{equation}\label{e:secondff} \mathrm{II}_{\Lambda} (X,X) = P^{\perp}_\Lambda\left (\frac{d^{2}}{dt^{2}}\vert_{t=0 }\gamma(t)\right ) =D \left ([A,[A,\Lambda]]\right ) = D\left ( [A,X]\right )\,.\end{equation}
To compute $H_\Lambda$ we pick the explicit orthonormal basis of $T_\Lambda\Sigma_\Lambda$ given by 
\[ e_{kl}:= \frac{1}{\sqrt{2}} \left ( e_k\otimes e_l + e_l\otimes e_k\right )\,, iE_{kl} := \frac{i}{\sqrt{2}}\left (e_k\otimes e_l - e_l\otimes e_k\right )\,,\] for $1\leq k <l \leq n$. Observe that  
\[ e_{kl} = \left[ \frac{1}{\lambda_l-\lambda_k}E_{kl},\Lambda\right], iE_{kl}=\left[\frac{i}{\lambda_l-\lambda_k} e_{kl},\Lambda\right] \in T_\Lambda\Sigma_\Lambda\,,\]
so that the collection is indeed an orthonormal basis of $T_\Lambda\Sigma_{\Lambda}$. It follows by \eqref{e:secondff} that 
\[ \mathrm{II}_{\Lambda}(e_{kl},e_{kl}) = \frac{1}{\lambda_l-\lambda_k} D([E_{kl},e_{kl}]) = \mathrm{II}_{\Lambda}(iE_{kl},iE_{kl})\,.\]
But $[E_{kl},e_{kl}] = e_k\otimes e_k - e_l\otimes e_l$, so 
\begin{align*} 
 H_\Lambda &= 2\sum_{k<l}\mathrm{II}_\Lambda(e_{kl},e_{kl}) =  2\sum_{k<l} \frac{1}{\lambda_l - \lambda_k} \left (e_k\otimes e_k-e_l\otimes e_l\right ) \\
 &  = 2 \sum_{k=1}^{n} \left (\sum_{l\neq k} \frac{1}{\lambda_l-\lambda_k} \right )e_k\otimes e_k\,.
\end{align*}
Consequently, we find from \eqref{e:coulombic}
\[ \frac{\partial}{\partial t} F( T,t ) = \sum _{j=1}^{n} \lambda_j' e_j\otimes e_j = \sum_{j=1}^{n} \left (\sum_{k\neq j}\frac{1}{\lambda_j-\lambda_k}\right )e_j\otimes e_j = -\frac{1}{2}H_{\Lambda_t}\,,\]
as claimed.
\end{proof}




\end{document}